\newtheorem{prop}{Proposition}
\newcommand{\ds}{\displaystyle}
\newcommand{\B}{\mathbb{B}}
\newcommand{\A}{\mathbb{A}}
\newcommand{\re}{\mathbb{R}}
\newcommand{\und}{\underline}
\title{On the minima of Markov and Lagrange
Dynamical Spectra.}
\author{Carlos Gustavo T. de A. Moreira\footnote{\text{Partially supported by CNPq.}}\\ gugu@impa.br}
\begin{document}
\maketitle

\vskip .2 in
{\small \hskip 3.2in \it Dedicated to Jean-Christophe Yoccoz}
\vskip .2in
\begin{abstract}
\noindent We consider typical Lagrange and Markov dynamical spectra associated to horseshoes on surfaces. We show that for a large set of real functions on the surface, the minima of the corresponding Lagrange and Markov dynamical spectra coincide and are given by the image of a periodic point of the dynamics by the real function. This solves a question by Jean-Christophe Yoccoz.
\end{abstract}
 \section{Introduction}

\noindent The classical Lagrange spectrum (cf. \cite{CF}) is defined as  follows: Given an irrational number $\alpha$, according to Dirichlet's theorem the inequality $\left|\alpha-\frac{p}{q}\right|<\frac{1}{q^2}$ has infinite rational solutions $\frac{p}{q}$. 
Markov and Hurwitz improved this result (cf. \cite{CF}), proving that, for all irrational $\alpha$, the inequality $\left|\alpha-\frac{p}{q}\right|<\frac{1}{\sqrt{5}q^2}$ has infinitely many rational solutions $\frac{p}{q}$.\\

\noindent On the other hand, for a fixed  irrational $\alpha$, better results can be expected. We associate, to each $\alpha$, its best constant of approximation (Lagrange value of $\alpha$), given by \begin{eqnarray*}k(\alpha)&=&\sup\left\{k>0:\left|\alpha-\frac{p}{q}\right|<\frac{1}{kq^2} \ \text{has infinitely many rational solutions $\frac{p}{q}$ }\right\}\\
&=&\limsup_{ \stackrel{|p|,q\to\infty}{p\in \mathbb{Z},q\in \mathbb N}}\left|q(q\alpha-p)\right|^{-1}\in \re\cup\{+\infty\}.
\end{eqnarray*}
\noindent Then, we always have $k(\alpha)\geq \sqrt{5}$.
The \textit{{Lagrange spectrum}} is the set $$L=\{k(\alpha):\alpha\in \re\setminus \mathbb{Q} \ \text{and} \ k(\alpha)<\infty\}.$$

\noindent
Let $\alpha$ be an  irrational number expressed in continued fractions by $\alpha=[a_0,a_1,\dots]$. Define, for each $n\in \mathbb{N}$, $\alpha_{n}=[a_n,a_{n+1},\dots]$ and $\beta_{n}=[0,a_{n-1},a_{n-2},\dots]$. Using elementary continued fractions techniques it can be proved that 
$$k(\alpha)=\limsup_{n\to \infty}(\alpha_n+\beta_{n}).$$

The study of the geometric structure of $L$ is a classical subject, which began with
Markov, proving in 1879 (\cite{Ma}) that
$$
L\cap(-\infty, 3)=\large\{k_1=\sqrt 5< k_2=2 \sqrt 2<k_3= \frac{\sqrt{221}}5<\cdots\large\}
$$
where $k_n$ is a sequence (of irrational numbers whose squares are rational) converging to $3$ - more precisely, the elements $k_n$ of $L\cap(-\infty, 3)$ are the numbers the form $\sqrt{9-\frac4{z^2}}$, where $z$ is a positive integer such that there are other positive integers $x, y$ with $x\le y\le z$ and $x^2+y^2+z^2=3xyz$.

\noindent 	Another interesting set is the classical \textit{Markov spectrum} defined by (cf. \cite{CF})
\begin{equation*}\label{SMClassic}
M=\left\{\inf_{(x,y)\in \mathbb{Z}^{2}\setminus(0,0)}|f(x,y)|^{-1}:f(x,y)=ax^2+bxy+cy^2 \ \text{with} \ b^2-4ac=1\right\}.\end{equation*}

It is possible to prove (cf. \cite{CF}) that $L$ and $M$ are closed subsets of the real line with $L\subset M$ and that $L\cap(-\infty, 3)=M\cap(-\infty, 3)$.

\noindent Both the Lagrange and Markov spectrum have a dynamical interpretation.  This fact is an important motivation for our work.\\

\noindent
Let $\Sigma=({\mathbb{N}^*})^{\mathbb{Z}}$ and $\sigma\colon \Sigma \to \Sigma$ the shift defined by $\sigma((a_n)_{n\in\mathbb{Z}})=(a_{n+1})_{n\in\mathbb{Z}}$. If $f\colon \Sigma \to \re$ is defined by $f((a_n)_{n\in\mathbb{Z}})=\alpha_{0}+\beta_{0}=[a_0,a_1,\dots]+[0,a_{-1},a_{-2},\dots]$, then 

$$L=\left\{\limsup_{n\to\infty}f(\sigma^{n}(\underline{\theta})):\underline{\theta}\in \Sigma\right\}$$
and  
$$M=\left\{\sup_{n\in\mathbb{Z}}f(\sigma^{n}(\underline{\theta})):\underline{\theta}\in \Sigma\right\}.$$
\noindent Notice that $\sqrt{5}$, which is the common minimum of $L$ and $M$, is the image by $f$ of the fixed point $(\dots,1 ,1, 1,\dots)$ of the shift map $\sigma$. 

This last interpretation, in terms of a shift, admits a natural generalization of Lagrange and Markov spectrum in the context of hyperbolic dynamics (at least in dimension 2, which is the focus of this work).\\

\noindent
We will define, as in \cite{MR}, the Markov and Lagrange dynamical spectra associated to a hyperbolic set as follows. Let $M^2$ be a surface and $\varphi \colon M^2\to M^2$ be a diffeomorphism with $\Lambda\subset M^2$ a hyperbolic set for $\varphi$ (which means that $\varphi(\Lambda) = \Lambda$ and there is a decomposition $T_\Lambda M^2
= E^s\oplus E^u$ such that $D\varphi\vert_{E^s}$ is uniformly contracting and
$D\varphi\mid_{E^u}$ is uniformly expanding). In this paper we will asume that $\Lambda$ is a {\it horseshoe}: it is a compact, locally maximal, hyperbolic invariant set of saddle type (and thus $\Lambda$ is not an attractor nor a repellor, and is topologically a Cantor set). Let $f\colon M^2\to \re$ be 
a continuous real function, then the \textit{Lagrange Dynamical Spectrum} associated to $(f,\Lambda)$ is defined by 
$$\ds L(f,\Lambda)=\left\{\limsup_{n\to\infty}f(\varphi^{n}(x)):x\in \Lambda\right\},$$

\noindent
and the \textit{Markov Dynamical Spectrum} associate to $(f,\Lambda)$ is defined by  
$$M(f,\Lambda)=\left\{ \sup_{n\in \mathbb{Z}}f(\varphi^{n}(x)):x\in \Lambda\right\}.$$

Here we prove the following theorem, which solves a question posed by Jean-Christophe Yoccoz to the author in 1998:\\ 
\ \\
\noindent {\bf Main Theorem} \emph{Let $\Lambda$ be a horseshoe associated to a $C^2$-diffeomorphism $\varphi$. Then there is a dense set $H\subset C^{\infty}(M,\re)$, which is is $C^0$-open, such that for all $f\in H$, we have 
\begin{equation*}
\min  L(f,\Lambda)=\min M(f,\Lambda)=f(p),
\end{equation*}
where $p=p(f)\in \Lambda$ is a periodic point of $\varphi$. Moreover, $f(p)$ is an isolated point both in $L(f,\Lambda)$ and in $M(f,\Lambda)$}\\

\noindent $\bf{Remark}:$ In the previous statement, a horseshoe
means a compact, locally maximal (which means that it is the maximal invariant set in some neighbourhood of it), transitive hyperbolic invariant set of saddle type (and so it contains a dense subset of periodic orbits).

{\bf Acknowledgements:} I would like to thank Carlos Matheus, Davi Lima and Sandoel Vieira for helpful discussions on the subject of this paper. I would also like to thank the anonymous referee for his very valuable comments and suggestions, which helped to substantially improve this work.


\section {Preliminaries from dynamical systems}\label{Prel}

\noindent If $\Lambda$ is a hyperbolic set associated to a $C^{2}$-diffeomorphism, then the stable and unstable foliations $\mathcal{F}^{s}(\Lambda)$ and $\mathcal{F}^{u}(\Lambda)$ are $C^{1+\varepsilon}$ for some $\varepsilon>0$. Moreover, these foliations can be extended to $C^{1+\varepsilon}$ foliations defined on a full neighborhood of $\Lambda$(cf. the comments at the end of section 4.1 of \cite{PT}, pp. 60).\\
We will consider the following setting. Let $\Lambda$ be a horseshoe of $\varphi$. Let us fix a geometrical Markov partition $\{R_a\}_{a\in\mathbb{A}}$ with sufficiently small diameter consisting of rectangles $R_a\simeq I_a^s\times I_a^u$ delimited by compact pieces $I_a^s$, resp. $I_a^u$, of stable, resp. unstable, manifolds of certain points of $\Lambda$ (cf. \cite[pp 129]{Shub} or \cite{PT} for more details). The set $\mathbb{B}\subset \mathbb{A}^{2}$ of admissible transitions consist of pairs $(a_{0},a_{1})$ such that $\varphi(R_{a_{0}})\cap R_{a_{1}}\neq \emptyset$. So, we can define the following transition matrix $B$ which induces the same transitions than $\B\subset \A^{2}$
$$b_{a_ia_j}=1 \ \ \text{if} \ \  \varphi(R_{a_i})\cap R_{a_j}\neq \emptyset,\ \ \ b_{a_ia_j}=0 \ \ \ \text{otherwise, for $(a_i,a_j)\in \A^{2}$.}$$

\noindent Let $\Sigma_{\mathbb{A}}=\left\{\und{a}=(a_{n})_{n\in \mathbb{Z}}:a_{n}\in \mathbb{A} \ \text{for all} \ n\in \mathbb{Z}\right\}$. We can define the homeomorphism of $\Sigma_{\mathbb{A}}$, the shift, $\sigma:\Sigma_{\mathbb{A}}\to\Sigma_{\mathbb{A}}$ defined by $\sigma(({a}_{n})_{n\in \mathbb{Z}})=({a}_{n+1})_{n\in \mathbb{Z}}$. \\
Let $\Sigma_{B}=\left\{\und{a}\in \Sigma_{\mathbb{A}}:b_{a_{n}a_{n+1}}=1\right\}$, this set is a closed and $\sigma$-invariant subspace of $\Sigma_{\mathbb{A}}$. Still denote by $\sigma$ the restriction of $\sigma$ to $\Sigma_{B}$. The pair $(\Sigma_{B},\sigma)$ is called a subshift of finite type of $(\Sigma_\A, \sigma)$.
\noindent Given $x,y\in \A$, we denote by $N_{n}(x,y,B)$ the number of admissible strings for $B$ of length $n+1$, beginning at $x$ and ending with $y$. Then the following holds
$$N_{n}(x,y,B)=b^{n}_{xy}.$$  
In particular, since $\varphi|_{\Lambda}$ is transitive, there is  $N_{0}\in \mathbb{N}^*$ such that for all $x,y\in \A$, $N_{N_{0}}(x,y,B)>0$.

\noindent Subshifts of finite type also have a sort of local product structure. First we define the local stable and unstable sets: (cf. \cite[chap 10]{Shub})
\begin{eqnarray*}
W_{1/3}^{s}(\und{a})&=&\left\{\und{b}\in \Sigma_{B}: \forall n\geq 0, \ d(\sigma^{n}(\und{a}),\sigma^{n}(\und{b}))\leq 1/3\right\}\\
&=&\left\{\und{b}\in \Sigma_{B}: \forall n\geq 0, \ a_n=b_n\right\},\\
W_{1/3}^{u}(\und{a})&=&\left\{\und{b}\in \Sigma_{B}: \forall n\leq 0, \ d(\sigma^{n}(\und{a}),\sigma^{n}(\und{b}))\leq 1/3\right\}\\
&=&\left\{\und{b}\in \Sigma_{B}: \forall n\leq 0, \ a_n=b_n\right\},
\end{eqnarray*}
where $d(\und{a},\und{b})=\sum_{n=-\infty}^{\infty}2^{-(2\left|n\right|+1)}\delta_{n}(\und{a},\und{b})$ and $\delta_{n}(\und{a},\und{b})$ is $0$ when $a_n=b_n$ and $1$ otherwise.\\
So, if $\und{a},\und{b}\in \Sigma_{B}$ and $d(\und{a},\und{b})<1/2$, then $a_0=b_0$ and $W_{1/3}^{u}(\und{a})\cap W_{1/3}^{u}(\und{b})$ is a unique point, denoted by the bracket $[\und{a},\und{b}]=(\cdots,b_{-n},\cdots,b_{-1},b_{0},a_{1},\cdots,a_{n},\cdots)$.  

\noindent  If $\varphi$ is a diffeomorphism of a surface ($2$-manifold), then the dynamics of $\varphi$ on $\Lambda$ is topologically conjugate to a subshift $\Sigma_{B}$ defined by $B$, namely, there is a homeomorphism $\Pi\colon \Sigma_{B} \to \Lambda$ such that, the following diagram commutes
$$\xymatrix{\Sigma_{B} \ar[r]^\sigma \ar[d]_\Pi & \Sigma_{B} \ar[d]^{\Pi \ \ \ \ \ \ds \emph{i.e.}, \ \ \ \varphi\circ\Pi=\Pi\circ \sigma.}\\
\Lambda \ar[r]^\varphi & \Lambda }$$
Moreover, $\Pi$ is a morphism of the local product structure, that is, $\Pi[\und{a},\und{b}]=[\Pi(\und{a}),\Pi(\und{b})]$, (cf. \cite[chap 10]{Shub}).

If $p=\Pi(\theta)\in \Lambda$, we say that $\theta$ is the {\it kneading sequence} of $p$.

Next, we recall that the stable and unstable manifolds of $\Lambda$ can be extended to locally invariant $C^{1+\varepsilon}$-foliations in a neighborhood of $\Lambda$ for some $\varepsilon>0$. Therefore, we can use these foliations to define projections $\pi_a^u:R_a\to I_a^s\times\{i_a^u\}$ and $\pi_a^s: R_a\to \{i_a^s\}\times I_a^u$ of the rectangles into the connected components $I_a^s\times\{i_a^u\}$ and $\{i_a^s\}\times I_a^u$ of the stable and unstable boundaries of $R_a$ where $i_a^u\in\partial I_a^u$ and $i_a^s\in\partial I_a^s$ are fixed arbitrarily. Using these projections, we have the stable and unstable Cantor sets 
$$K^s=\bigcup\limits_{a\in\mathbb{A}}\pi_a^u(\Lambda\cap R_a) \quad \textrm{ and } \quad K^u=\bigcup\limits_{a\in\mathbb{A}}\pi_a^s(\Lambda\cap R_a)$$
associated to $\Lambda$. 

The stable and unstable Cantor sets $K^s$ and $K^u$ are $C^{1+\varepsilon}$-dynamically defined / $C^{1+\varepsilon}$-regular Cantor sets, i.e., the $C^{1+\varepsilon}$-maps 
$$g_s(\pi_{a_1}^u(y))=\pi_{a_0}^u(\varphi^{-1}(y))$$
for $y\in R_{a_1}\cap \varphi(R_{a_0})$
and
$$g_u(\pi_{a_0}^s(z))=\pi_{a_1}^s(\varphi(z))$$
for $z\in R_{a_0}\cap \varphi^{-1}(R_{a_1})$ are expanding of type $\Sigma_\mathbb{B}$ defining $K^s$ and $K^u$ in the sense that 
\begin{itemize}
\item the domains of $g_s$ and $g_u$ are disjoint unions $\bigsqcup\limits_{(a_0,a_1)\in\mathbb{B}} I^s(a_1, a_0)$ and $\bigsqcup\limits_{(a_0, a_1)\in\mathbb{B}} I^u(a_0, a_1)$ where $I^s(a_1, a_0)$, resp. $I^u(a_0, a_1)$, are compact subintervals of $I_{a_1}^s$, resp. $I_{a_0}^u$; 
\item for each $(a_0, a_1)\in\mathbb{B}$, the restrictions $g_s|_{I^s(a_0,a_1)}$ and $g_u|_{I^u(a_0,a_1)}$ are $C^{1+\varepsilon}$ diffeomorphisms onto $I^s_{a_0}$ and $I_{a_0}^u$ with $|Dg_s(t)|>1$, resp. $|Dg_u(t)|>1$, for all $t\in I^s(a_0, a_1)$, resp. $I^u(a_0, a_1)$ (for appropriate choices of the parametrization of $I_a^s$ and $I_a^u$); 
\item $K^s$, resp. $K^u$, are the maximal invariant sets associated to $g_s$, resp. $g_u$, that is, 
$$K^s=\bigcap\limits_{n\in\mathbb{N}}g_s^{-n}\left(\bigcup\limits_{(a_0,a_1)\in\mathbb{B}} I^s(a_1,a_0)\right) \textrm{ and } K^u=\bigcap\limits_{n\in\mathbb{N}}g_u^{-n}\left(\bigcup\limits_{(a_0,a_1)\in\mathbb{B}} I^u(a_0,a_1)\right)$$
\end{itemize} 
(see section 1 of chapter 4 of \cite{PT} and chapter 1 of \cite{MY1} for more informations on regular Cantor sets associated to horseshoes in surfaces).

Moreover, we will think the intervals $I^u_a$, resp. $I^s_a$, $a\in\mathbb{A}$ inside an abstract line so that it makes sense to say that the interval $I^u_a$, resp. $I^s_a$, is located to the left or to the right of the interval $I^u_b$, resp. $I^s_b$, for $a,b\in\mathbb{A}$. 

The stable and unstable Cantor sets $K^s$ and $K^u$ are closely related to the geometry of the horseshoe $\Lambda$: for instance, the horseshoe $\Lambda$ is locally diffeomorphic to the Cartesian product of the two regular Cantor sets $K^{s}$ and $K^{u}$ (since the stable and unstable foliations of $\Lambda$ are of class $C^1$). Moreover, the Hausdorff dimension of any regular Cantor set coincides with its box dimension. It follows that 
$$HD(\Lambda) = HD(K^s)+ HD(K^u)=:d_s+d_u$$
where $HD$ stands for the Hausdorff dimension (cf. Proposition 4 in \cite[chap. 4]{PT} and the comments before it).

In the paper \cite{CMM}, we were interested in the fractal geometry (Hausdorff dimension) of the sets $M(f, \Lambda)\cap (-\infty, t)$ and $L(f, \Lambda)\cap (-\infty, t)$ as $t\in\mathbb{R}$ varies. 

For this reason, we will also study the fractal geometry of 
$$\Lambda_t:=\bigcap\limits_{n\in\mathbb{Z}}\varphi^{-n}(\{y\in\Lambda: f(y)\leq t\}) = \{x\in\Lambda: m_{\varphi, f}(x)=\sup\limits_{n\in\mathbb{Z}}f(\varphi^n(x))\leq t\}$$ 
for $t\in\mathbb{R}$.

\section{Proofs}

We may study the subsets $\Lambda_t$ introduced above through its projections  
$$K_t^s=\bigcup_{a\in\mathbb{A}}\pi_a^u(\Lambda_t\cap R_a) \textrm{ and } K_t^u = \bigcup_{a\in\mathbb{A}}\pi_a^s(\Lambda_t\cap R_a)$$
on the stable and unstable Cantor sets of $\Lambda$.

It follows from the proof of Theorem 1.2 of \cite{CMM} (see remarks 1.3, 1.4 and 2.10 of \cite{CMM}) that (even in the non-conservative case) the box dimensions $D_s(t)$ of $K_t^s$ and $D_u(t)$ of $K_t^u$ depend continuously on $t$. In particular, if $t_0=\min M(f, \Lambda)$, then $D_s(t_0)=D_u(t_0)=0$, since, for any $t<t_0$, the sets $\Lambda_t$, $K_t^s$ and $K_t^u$ are empty, so $D_s(t)=D_u(t)=0$. Since the stable and unstable foliations of $\Lambda$ are of class $C^1$, the box dimension of $\Lambda_{t_0}$ is at most the sum of the box dimensions of $K_{t_0}^s$ and $K_{t_0}^u$, and so is equal to $0$.
It follows that, for any $\epsilon>0$, there is a locally maximal subhorseshoe (of finite type) $\tilde\Lambda\subset\Lambda$ with $\Lambda_{t_0}\subset\tilde\Lambda$ and $HD(\tilde\Lambda)<\epsilon$ (we may fix a large positive integer $m$ and take $\tilde\Lambda$ as the set of points of $\Lambda$ in whose kneading sequences all factors of size $m$ are factors of the kneading sequence of some element of $\Lambda_{t_0}$ - the number of such factors grow subexponentially in $m$ since the box dimension of $\Lambda_{t_0}$ is $0$, so the Hausdorff dimension of $\tilde\Lambda$ is small when $m$ is large by the estimates on fractal dimensions of \cite[chap. 4]{PT}).

\begin{prop}
Let $k>1$ be an integer. If $HD(\tilde\Lambda)<1/2k$, then, for any $r\in \mathbb{N}\cup \{\infty\}$,  there is a dense set (in the $C^r$ topology) of $C^r$ real functions $f$ such that, for some $c>0$, $|f(p)-f(q)|\ge c\cdot|p-q|^{k/(k-1)}, \forall p, q\in \tilde\Lambda$ (and in particular $f|_{\tilde\Lambda}$ is injective; moreover, its inverse function is $(1-1/k)$-H\"older).
\end{prop}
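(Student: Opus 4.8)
The plan is to build the function $f$ by perturbing an arbitrary $C^r$ function $f_0$ with a carefully chosen family of bump functions that separate points of $\tilde\Lambda$ at all scales. Since $HD(\tilde\Lambda)<1/2k$, we can cover $\tilde\Lambda$ at each dyadic scale $2^{-n}$ by a number of boxes $R_{n,j}$ (of size comparable to $2^{-n}$) that grows subexponentially, in fact like $2^{\theta n}$ for any fixed $\theta$ with $\max(2k\cdot HD(\tilde\Lambda),\text{something})<\theta<1/k$; the key inequality we will exploit is $\theta k<1$. The idea is standard in the ``Cantor sets have no arithmetic/functional resonances'' circle of ideas: on each box at scale $n$ we add a perturbation of $C^r$-size $\lesssim 2^{-rn}$ (hence summable, and arbitrarily small in $C^r$, and $C^0$-small) that moves the $f$-values of the finitely many scale-$n$ pieces of $\tilde\Lambda$ so that they become ``well separated'' relative to $2^{-n}$.

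The key steps, in order. First I would fix a Markov structure on $\tilde\Lambda$ (it is a subhorseshoe of finite type) and, for each $n$, list the cylinders $C$ of ``depth $n$'' in $\tilde\Lambda$, each of diameter comparable to a uniform $\lambda^{-n}$; after rescaling the scale parameter we may as well index by dyadic scales. Second, for each $n$ I would choose a finite collection of auxiliary $C^r$ bumps, supported in disjoint $\approx 2^{-n}$-neighborhoods of the depth-$n$ cylinders, with $C^r$ norm $\le \varepsilon 2^{-(r+2)n}$, chosen so that after adding them the values $f(C)$, ranging over depth-$n$ cylinders $C$, are pairwise ``$c\cdot 2^{-nk/(k-1)}$-separated at scale $n$'' in the following sense: if $p,q\in\tilde\Lambda$ lie in distinct depth-$n$ cylinders but in the same depth-$(n-1)$ cylinder, then $|f(p)-f(q)|\ge c\cdot 2^{-nk/(k-1)}$. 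This is possible because there are only $\lesssim 2^{\theta n}$ such cylinders sharing a parent, so we have room to spread their $f$-images inside an interval of length $\gtrsim 2^{-n}$ while keeping each image point at distance $\gtrsim 2^{-n}/2^{\theta n}=2^{-(1+\theta)n}$ from the others — and one checks $2^{-(1+\theta)n}\ge c\,2^{-nk/(k-1)}$ exactly when $(1+\theta)\le k/(k-1)$, i.e. $\theta\le 1/(k-1)$, which holds since $\theta<1/k<1/(k-1)$; one also needs the perturbation to be smaller than the gap being created, which is guaranteed by the $2^{-(r+2)n}$ bound and summability over coarser scales. Third, I would set $f=f_0+\sum_n(\text{scale-}n\text{ perturbations})$; this converges in $C^r$ and is $C^r$-close to $f_0$, giving density. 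Finally, given $p\ne q$ in $\tilde\Lambda$, let $n$ be the unique scale at which they first lie in distinct depth-$n$ cylinders; then $|p-q|\asymp 2^{-n}$ (up to uniform constants from the regular-Cantor-set geometry and the $C^1$ product structure of $\Lambda$), while the scale-$n$ construction forces $|f(p)-f(q)|\ge c\,2^{-nk/(k-1)}\asymp c'|p-q|^{k/(k-1)}$, and the contributions of all other scales are dominated by this gap by the summable choice of norms. This yields the desired inequality; injectivity of $f|_{\tilde\Lambda}$ and the $(1-1/k)$-H\"older bound on the inverse are immediate from it.

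The main obstacle is the bookkeeping that makes the three scales — the size $2^{-n}$ of a cylinder, the number $\lesssim 2^{\theta n}$ of siblings, and the target separation $2^{-nk/(k-1)}$ — fit together uniformly, \emph{and} guarantees that the perturbations added at finer scales $m>n$ (each of size $\lesssim 2^{-(r+2)m}$, hence total $\lesssim 2^{-(r+2)(n+1)}$) do not destroy the gap created at scale $n$: one must check $2^{-(r+2)(n+1)}\ll 2^{-nk/(k-1)}$, which needs $r$ large enough that $r+2>k/(k-1)$ — true for $r\ge 1$, and for $r=\infty$ we have total freedom, but the $r=0,1$ edge cases (and the uniformity of all implied constants in $n$, which rests on bounded geometry of the regular Cantor sets from \cite{PT}) require care. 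A secondary subtlety is that we want $c$ to depend only on $\tilde\Lambda$ and the construction, not on $p,q$; this follows because the separation we impose at every scale is of the same homogeneous form $c\,2^{-nk/(k-1)}$ with a single constant $c$, chosen once at the start small enough for all the inequalities above. Once these uniformities are arranged, the conclusion falls out by the two-sided comparison $|p-q|\asymp 2^{-n}$ at the first scale of separation.
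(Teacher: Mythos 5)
There is a genuine gap here, and it is fatal to the strategy rather than a matter of bookkeeping. Your scale-$n$ perturbation consists of bumps supported in $2^{-n}$-neighbourhoods with $C^r$ norm at most $\varepsilon 2^{-(r+2)n}$; since the $C^0$ norm is dominated by the $C^r$ norm, such a perturbation moves values of $f$ by at most $\varepsilon 2^{-(r+2)n}$, while the gap it is supposed to \emph{create} between sibling cylinders is $c\,2^{-nk/(k-1)}$ with $k/(k-1)\le 2\le r+2$. Starting from an arbitrary $f_0$ (say, a constant), the perturbation is thus exponentially smaller than the separation it must produce: the construction is internally inconsistent for every $r\ge 1$. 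This cannot be repaired by enlarging the norms: a $C^r$ function supported in a ball of radius $2^{-n}$ with $C^r$ norm $\epsilon_n$ has height $O(\epsilon_n 2^{-rn})$ (Taylor at a boundary point of the support), so summability of the $\epsilon_n$ forces displacements $o(2^{-rn})\ll 2^{-nk/(k-1)}$ as soon as $r\ge 2$, and the approach is hopeless for $r=\infty$. You checked the wrong inequality: the worry is not that finer scales destroy the gap, but that scale $n$ cannot create it. A second, independent problem is the intermediate claim itself: requiring $|f(p)-f(q)|\ge c2^{-nk/(k-1)}$ for all $p,q$ in distinct depth-$n$ siblings means the images $f(C\cap\tilde\Lambda)$ must be pairwise disjoint with definite gaps; each has diameter comparable to $2^{-n}$ (once $Df$ is nonvanishing along $E^s$ and $E^u$, a normalization you also need), and they must all fit inside the parent's image of diameter comparable to $2^{-n+1}$, which is impossible as soon as a parent has three or more children. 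Only the fractal (dimension $<1/2k$) nature of these images, exploited pairwise, can save this, and indeed your exponent check $\theta\le 1/(k-1)$ never actually uses the hypothesis $HD(\tilde\Lambda)<1/2k$; the factor $2$ enters because one must count \emph{pairs} of pieces.

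The paper's proof avoids both problems by not using localized bumps at all. After (i) perturbing so that $Df$ is nonvanishing along the stable and unstable directions on $\tilde\Lambda$ and (ii) adding independent constants $t_a$ on the finitely many Markov boxes so that the sets $f(\tilde\Lambda\cap R_a)$ become disjoint (via the box dimension of the arithmetic difference being $<1$), it perturbs $f$ inside each box by the one-parameter family $f_\lambda(x,y)=f(x,\lambda y)$ in adapted coordinates. A fixed $\lambda$ near $1$ is $C^\infty$-small, yet it shifts the $f$-values of two pieces at mutual distance $\eta$ relative to one another by an amount of order $|\lambda-1|\,\eta$ --- proportional to the scale, at all scales simultaneously --- which is exactly what $C^r$-small localized bumps cannot achieve. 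A parameter-exclusion argument then finishes: for each $r$, a pair of $2^{-kr}$-pieces lying in distinct $2^{-(k-1)r}$-pieces contributes a bad set of $\lambda$'s of measure $O(2^{-r})$, there are $O(2^{2dkr})$ such pairs with $2dk<1$, and the total excluded measure is summable over $r$. Any attempt to salvage your multiscale scheme would have to replace the bumps by perturbations whose effect at scale $2^{-n}$ is proportional to $2^{-n}$ while remaining smooth and small, which is precisely what this shear family provides.
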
 
\begin{proof}
Given a smooth function $f$, there are, as in Proposition 2.7 of \cite{CMM}, arbitrarily small perturbations of it whose derivative does not vanish at the stable and unstable directions in points of $\tilde\Lambda$, so we will assume that $f$ satisfies this property.

Given a Markov partition $\{R_a\}_{a\in\mathbb{A}}$ with sufficiently small diameter as before, we may perturb $f$ by adding, for each $a$, independently, a small constant $t_a$ to $f$ in a small neighbourhood of $\tilde\Lambda\cap R_a$ (notice that the compact sets $(\tilde \Lambda\cap R_a)$ are mutually disjoint). Since, for $a\ne b$, the images $f(\tilde\Lambda\cap R_a)$ and $f(\tilde\Lambda\cap R_b)$ have box dimensions smaller than $1/2k$, their arithmetic difference $f(\tilde\Lambda\cap R_a)-f(\tilde\Lambda\cap R_b)=\{x-y, x\in f(\tilde\Lambda\cap R_a), y\in f(\tilde\Lambda\cap R_b)\}=\{t\in\mathbb R|f(\tilde\Lambda\cap R_a)\cap(f(\tilde\Lambda\cap R_b)+t)\ne\emptyset$ has box dimension smaller than $1/k<1$, and so, for almost all $t_a, t_b$, the perturbed images $f(\tilde\Lambda\cap R_a)+t_a$ and $f(\tilde\Lambda\cap R_b)+t_b$ are disjoint.

Consider now parametrizations of small neighbourhoods of the pieces $R_a$ according to which the stable leaves of $\tilde\Lambda$ are $C^1$ close to be horizontal and the unstable leaves of $\tilde\Lambda$ are $C^1$ close to be vertical, and such that, in the coordinates given by these paramerizations, $f(x,y)$ is $C^1$ close to an affine map $f(x,y)=ax+by+c$, with $a$ and $b$ far from $0$ (these parametrizations exist since the pieces $R_a$ are chosen very small, so $f$ is close to be affine in $R_a$). Then we may consider, in each coordinate system as above, perturbations of $f$ of the type $f_{\lambda}(x,y)=f(x,\lambda y)$, where $\lambda$ is a parameter close to $1$. 

Given $\rho>0$ small, a $\rho$-decomposition of $\tilde\Lambda\cap R_a$ is a decomposition of it in a union of rectangles $I_j^s\times I_j^u$ intersected with $\tilde\Lambda$ such that both intervals $I_j^s$ and $I_j^u$ have length of the order of $\rho$. Let $r$ be a large positive integer, and consider $2^{-kr}$ and $2^{-(k-1)r}$-decompositions of $\tilde\Lambda\cap R_a$. Given two rectangles of the $2^{-kr}$-decomposition which belong to different rectangles of the $2^{-(k-1)r}$-decomposition (and so have distance at least of the order of $2^{-(k-1)r}$), the measure of the interval of values of $\lambda$ such that the images by $f_{\lambda}$ of the two rectangles of the $2^{-kr}$-decomposition have distance smaller than $2^{-kr}$ is at most of the order of $2^{-kr}/2^{-(k-1)r}=2^{-r}$. Since the box dimension of $\tilde\Lambda$ is $d<1/2k$, the number of pairs of rectangles in the $2^{-kr}$-decomposition is of the order of $(2^{-kr})^{-2d}=2^{2dkr}$, and so the the measure of the set of values of $\lambda$ such that the images of some pair as before of two rectangles of the $2^{-kr}$-decomposition have non-empty intersection is at most of the order of $2^{2dkr}\cdot 2^{-r}=2^{-(1-2dk)r}\ll 1$ (notice that $2dk<1$). The sum of these measures for all $r\ge r_0$ is $O(2^{-(1-2dk)r_0})\ll 1$, and so there is $\lambda$ close to $1$ such that, if $\epsilon$ is small and $p, q\in \tilde\Lambda\cap R_a$ are such that $|p-q|\ge \epsilon$ then $|f(p)-f(q)|$ is at least of the order of $\epsilon^{k/(k-1)}$ (consider $r$ in the above discussion such that $2^{-(k-1)r}$ is of the order of $\epsilon$). This implies the result: for some $c>0$, $|f(p)-f(q)|\ge c\cdot|p-q|^{k/(k-1)}, \forall p, q\in \tilde\Lambda$. It follows that $f|_{\tilde\Lambda}$ is injective and its inverse function $g$ is $(1-1/k)$-H\"older: indeed, it satisfies $|g(x)-g(y)|\le (c^{-1}|x-y|)^{(1-1/k)}$, for any $x, y\in f(\tilde\Lambda)$.
\end{proof}




Let $\varphi:M\rightarrow M$ be a diffeomorphism of a compact 2-manifold $M$ and let $\Lambda$ be a horseshoe for $\varphi$.

\ \\
We recall the following remark from \cite{MR}:

\noindent{\bf{Remark:}} We have  $L(f,\Lambda)\subset M(f,\Lambda)$ for any $f\in C^{0}(M,\mathbb{R})$. 
In fact:\\
Let $a\in L(f,\Lambda)$, then there is $x_0\in \Lambda$ such that $\ds a=\limsup_{n \to +\infty}f(\varphi^{n}(x_0))$. Since $\Lambda$ is a compact set, then there is a subsequence $(\varphi^{n_{k}}(x_0))$ of $(\varphi^{n}(x_0))$ such that $\ds\lim_{k\to +\infty}\varphi^{n_k}(x_0)=y_{0}$ and 
$$a=\limsup_{n \to +\infty}f(\varphi^{n}(x_0))=\lim_{k\to +\infty}f(\varphi^{n_k}(x_0))=f(y_0).$$

\noindent {\it{Claim:}} $f(y_{0})\geq f(\varphi^{n}(y_0))$ for all $n\in \mathbb{Z}$. Otherwise, suppose there is $n_{0}\in \mathbb{Z}$ such that $f(y_{0})<f(\varphi^{n_0}(y_{0}))$. Put $\epsilon=f(\varphi^{n_0}(y_{0}))-f(y_0)$, then, since $f$ is a continuous function, there is a neighborhood $U$ of $y_0$ such that 
$$f(y_{0})+\frac{\epsilon}{2}<f(\varphi^{n_0}(z)) \ \text{for all} \ z\in U.$$
Thus, since $\varphi^{n_k}(x_0)\to y_0$, then there is $k_{0}\in \mathbb{N}$ such that $\varphi^{n_k}(x_0)\in U$ for $k\geq k_0$, therefore, 
$$f(y_{0})+\frac{\epsilon}{2}<f(\varphi^{n_0+n_k}(x_0)) \ \text{for all} \ k\geq k_0.$$
This contradicts the definition of $a=f(y_0)$.\\
\ \\

\begin{prop}
Assume that $f|_{\tilde\Lambda}$ is injective. Then $\min  L(f,\Lambda)=\min M(f,\Lambda)=f(p)$ for only one value of $p\in \tilde\Lambda$ such that the restriction of $\varphi$ to the closure of the orbit of $p$ is minimal.
\end{prop}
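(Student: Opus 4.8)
The plan is to extract, from a point of $\Lambda$ realizing $t_0:=\min M(f,\Lambda)$, a minimal subsystem contained in $\tilde\Lambda$ on which $f$ attains the value $t_0$, and then to use the injectivity of $f|_{\tilde\Lambda}$ to single out one point.

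First I would note that $t_0$ is attained: $m_{\varphi,f}(x)=\sup_{n\in\mathbb{Z}}f(\varphi^n(x))$ is a supremum of continuous functions, hence lower semicontinuous on the compact set $\Lambda$, so there is $x_0\in\Lambda$ with $m_{\varphi,f}(x_0)=t_0$. Taking limits along the orbit of $x_0$ shows $f(\varphi^m(y))\le t_0$ for every $m\in\mathbb{Z}$ and every $y\in\overline{\mathcal{O}(x_0)}$, i.e. $\overline{\mathcal{O}(x_0)}\subset\Lambda_{t_0}\subset\tilde\Lambda$. By Zorn's lemma the nonempty compact $\varphi$-invariant set $\overline{\mathcal{O}(x_0)}$ contains a nonempty compact $\varphi$-invariant minimal set $K$, and $K\subset\tilde\Lambda$.

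Next I would observe that $m_{\varphi,f}$ is constant on $K$: for $y\in K$ the forward orbit of $y$ is dense in $K$ (minimality), so $m_{\varphi,f}(y)=\sup_{z\in K}f(z)=:t_1$, independent of $y$. Since $K\subset\Lambda_{t_0}$ forces $t_1\le t_0$, while $t_1=m_{\varphi,f}(y)\in M(f,\Lambda)$ forces $t_1\ge t_0$, we get $t_1=t_0$. Let $p\in K$ be a point at which the continuous function $f$ attains its maximum $t_0$ over the compact set $K$; then $f(p)=t_0$ and $\overline{\mathcal{O}(p)}=K$ is minimal, so $p$ has the required form. To see that $t_0\in L(f,\Lambda)$ — which, together with $L(f,\Lambda)\subset M(f,\Lambda)$, gives $\min L(f,\Lambda)=\min M(f,\Lambda)=f(p)$ — use that $p\in\omega(p)$ by minimality: some $\varphi^{n_k}(p)\to p$ with $n_k\to\infty$, so $f(\varphi^{n_k}(p))\to f(p)=t_0$, while $f(\varphi^n(p))\le\sup_K f=t_0$ for all $n$; hence $\limsup_{n}f(\varphi^n(p))=t_0$.

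Uniqueness is where the injectivity hypothesis enters, and I expect it to be the quickest step: if $p'\in\tilde\Lambda$ also has $\varphi|_{\overline{\mathcal{O}(p')}}$ minimal and $f(p')=\min M(f,\Lambda)=t_0$, then $f(p)=f(p')$ with $p,p'\in\tilde\Lambda$, so $p=p'$. The main obstacle is the first half: organizing the semicontinuity and compactness steps so that the minimal set $K$ genuinely sits inside $\tilde\Lambda$ (this is exactly what makes the injectivity hypothesis applicable, and it relies on the earlier fact $\Lambda_{t_0}\subset\tilde\Lambda$), together with checking carefully that $\sup_K f$ equals $t_0$ and not merely a number $\le t_0$ or $\ge t_0$.
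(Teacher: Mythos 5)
Your proposal is correct, and it reaches the statement by a somewhat different route than the paper. The paper starts from a point $p$ realizing $\min M(f,\Lambda)$ (attainment is taken for granted there), uses injectivity of $f|_{\tilde\Lambda}$ at the outset to get $f(\varphi^j(p))<f(p)$ whenever $\varphi^j(p)\neq p$, and then proves minimality of $\overline{\mathcal{O}(p)}$ itself by contradiction: if some limit point $q$ of the orbit of $p$ had $p\notin\overline{\mathcal{O}(q)}$, injectivity would give $f(p)\notin f\bigl(\overline{\mathcal{O}(q)}\bigr)$, so the Markov value of $q$ would be strictly below $\min M(f,\Lambda)$; minimality then yields that $f(p)$ is also the Lagrange value of the orbit. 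You instead justify attainment of $\min M$ via lower semicontinuity of $m_{\varphi,f}$, extract by Zorn a minimal set $K\subset\overline{\mathcal{O}(x_0)}\subset\Lambda_{t_0}\subset\tilde\Lambda$, show $\sup_K f=t_0$ by squeezing between $K\subset\Lambda_{t_0}$ and $\min M$, take $p$ to be the maximizer of $f$ on $K$, and use injectivity only for uniqueness. Both arguments hinge on the same two facts ($\Lambda_{t_0}\subset\tilde\Lambda$ and $L\subset M$), and by your uniqueness step your $p$ coincides with the paper's, so you do recover that the orbit closure of the minimizer itself is minimal. What your version buys is a cleaner separation of roles (injectivity is needed only for uniqueness; existence of the minimal point with Markov--Lagrange value $t_0$ is pure compactness) plus an explicit proof that the minimum of $M(f,\Lambda)$ is attained; what the paper's version buys is avoiding Zorn by exhibiting the minimal set directly as $\overline{\mathcal{O}(p)}$, via the ``limit point with strictly smaller Markov value'' mechanism that reappears in spirit in the proof of the Main Theorem.
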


\begin{proof}
Let $p\in \Lambda$ be such that $f(p)=\min M(f,\Lambda)$, which is unique since $f|_{\tilde\Lambda}$ is injective. We have $f(\varphi^j(p))<f(p)$ for all integer $j$ such that $\varphi^j(p)\ne p$. If some subsequence $\varphi^{n_k}(p)$ converges to a point $q$ such that $p$ does not belong to the closure of the orbit of $q$, $f(p)$ does not belong to the image by $f$ of the closure of the orbit of $q$ and so the Markov value of the orbit of $q$ is strictly smaller than $f(p)$, a contradiction. This implies that the restriction of $\varphi$ to the closure of the orbit of $p$ is minimal and, in particular, $f(p)$ is the Lagrange value of its orbit, so we also have $f(p)=\min  L(f,\Lambda)$. 
\end{proof}

Let $\Lambda$ be a horseshoe associated to a $C^2$-diffeomorphism $\varphi$. We define $X\subset C^0(M,\re)$ as the set of real functions $f$ for which
\begin{equation*}
\min  L(f,\Lambda)=\min M(f,\Lambda)=f(p),
\end{equation*}
where $p=p(f)\in \Lambda$ is a periodic point of $\varphi$ and there is $\varepsilon>0$ such that, for every $q\in\Lambda$ which does not belong to the orbit of $p$, $sup_{n\in \mathbb{Z}}f(\varphi^{n}(q))>f(p)+\varepsilon$.

\begin{prop} 
$X$ is open in $C^0(M,\re)$
\end{prop}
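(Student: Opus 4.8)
The plan is to show that the defining conditions of $X$ are stable under small $C^0$ perturbations, using the uniform gap $\varepsilon$ built into the definition of $X$. So suppose $f\in X$ with associated periodic point $p$ of period $N$, so that $m_{\varphi,f}(p)=f(p)=\min M(f,\Lambda)=\min L(f,\Lambda)$, and there is $\varepsilon>0$ with $\sup_{n\in\mathbb Z}f(\varphi^n(q))>f(p)+\varepsilon$ for every $q\in\Lambda$ outside the orbit $\mathcal O(p)=\{p,\varphi(p),\dots,\varphi^{N-1}(p)\}$. I want to produce $\delta>0$ such that every $g$ with $\|g-f\|_{C^0}<\delta$ again lies in $X$.

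First I would record two elementary facts about the functional $q\mapsto m_{\varphi,f}(q)=\sup_{n\in\mathbb Z}f(\varphi^n(q))$ on the compact set $\Lambda$. It satisfies $|m_{\varphi,f}(q)-m_{\varphi,g}(q)|\le\|f-g\|_{C^0}$ for all $q$, since taking suprema of functions that are uniformly close stays uniformly close. Consequently $|\min M(f,\Lambda)-\min M(g,\Lambda)|\le\|f-g\|_{C^0}$, and the same for $\min L$ (the Lagrange functional $q\mapsto\limsup_n f(\varphi^n(q))$ is likewise $1$-Lipschitz in $f$ for the $C^0$ norm). Second, the orbit of $p$ is a hyperbolic periodic orbit, so it has a continuation: for $\delta$ small there is a nearby (here actually the same, since we only perturb $f$ and not $\varphi$) periodic orbit, and in fact since $\varphi$ is unchanged the orbit $\mathcal O(p)$ persists literally; only the value $g(p)$ moves, and $m_{\varphi,g}(p)=\max_{0\le j<N}g(\varphi^j(p))$ is within $\delta$ of $f(p)$.

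Now fix $\delta<\varepsilon/4$ (say). For any $q\in\Lambda\setminus\mathcal O(p)$ we have $m_{\varphi,g}(q)\ge m_{\varphi,f}(q)-\delta>f(p)+\varepsilon-\delta\ge f(p)+3\varepsilon/4 \ge m_{\varphi,g}(p)+\varepsilon/2$, where the last step uses $m_{\varphi,g}(p)\le f(p)+\delta\le f(p)+\varepsilon/4$. Hence the minimum of $m_{\varphi,g}$ over all of $\Lambda$ is attained only on $\mathcal O(p)$, so $\min M(g,\Lambda)=m_{\varphi,g}(p)=g(\varphi^{j_0}(p))$ for the index $j_0$ realizing the max, and we may take $p(g):=\varphi^{j_0}(p)$, still a periodic point of $\varphi$. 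Moreover the displayed inequality shows the uniform-gap condition holds for $g$ with the new constant $\varepsilon/2$. Finally, to get $\min L(g,\Lambda)=\min M(g,\Lambda)=g(p(g))$: since $p(g)$ is periodic, $\limsup_n g(\varphi^n(p(g)))=\max_{0\le j<N}g(\varphi^j(p))=m_{\varphi,g}(p(g))$, so $g(p(g))\in L(g,\Lambda)$; and $\min L(g,\Lambda)\ge\min M(g,\Lambda)$ always holds because $L(g,\Lambda)\subset M(g,\Lambda)$ (the Remark recalled from \cite{MR}). Combining, $\min L(g,\Lambda)=g(p(g))=\min M(g,\Lambda)$, so $g\in X$.

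The only genuinely delicate point is the $1$-Lipschitz (in the $C^0$ norm of the potential) dependence of $q\mapsto\limsup_n f(\varphi^n(q))$, which must be handled so that $\min L$ moves continuously; but this is immediate from $|\limsup a_n-\limsup b_n|\le\sup_n|a_n-b_n|$ applied to $a_n=f(\varphi^n(q))$, $b_n=g(\varphi^n(q))$. Everything else is a bookkeeping of the constant $\varepsilon$, so I do not anticipate an obstacle; the essential content of the proposition is simply that $X$ was defined with a built-in uniform buffer $\varepsilon$ precisely to make it $C^0$-open.
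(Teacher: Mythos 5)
Your proof is correct and follows essentially the same route as the paper: perturb by less than a fixed fraction of the gap $\varepsilon$, observe that the supremum over the (unchanged) periodic orbit of $p$ moves by at most that amount while every other orbit keeps a definite margin above it, and conclude that $g$ satisfies the defining conditions of $X$ with a smaller gap. Your explicit justification that $\min L(g,\Lambda)=\min M(g,\Lambda)$ (via periodicity of $p(g)$ and the inclusion $L\subset M$) fills in a step the paper leaves implicit, but the argument is the same.
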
 
\begin{proof}
Suppose that $f\in X$. If $\varepsilon>0$ is as in the definition of $X$, let $g\in C^0(M,\re)$ such that $|g(x)-f(x)|<\varepsilon/3, \forall x\in M$. Then we have $sup_{n\in \mathbb{Z}}g(\varphi^{n}(p))=g(\tilde p)<f(\tilde p)+\varepsilon/3\le f(p)+\varepsilon/3$, for some point $\tilde p$ in the (finite) orbit of $p$. Moreover, for every $q\in\Lambda$ which does not belong to the orbit of $p$, since $g(\varphi^{n}(q))>f(\varphi^{n}(q))-\varepsilon/3$, we have $sup_{n\in \mathbb{Z}}g(\varphi^{n}(q))\ge sup_{n\in \mathbb{Z}}f(\varphi^{n}(q))-\varepsilon/3>f(p)+\varepsilon-\varepsilon/3=f(p)+2\varepsilon/3>g(\tilde p)+\varepsilon/3$. So we have $\min  L(g,\Lambda)=\min M(g,\Lambda)=g(\tilde p)$ and $g\in X$.
\end{proof}

{\bf Proof of the Main Theorem:}
Let $t_0=\min M(f, \Lambda)$. Fix a large positive integer $K$ and consider a locally maximal subhorseshoe $\tilde\Lambda\subset\Lambda$ with $\Lambda_{t_0}\subset\tilde\Lambda$ and $HD(\tilde\Lambda)<1/2K$. We take symbolic representations of points of $\Lambda$ associated to a Markov partition of $\Lambda$. We will assume that $f$ satisfies the conclusions of Proposition 1 (replacing $k$ by $K$). We will prove that, under these conditions, we have $f\in X$, which will conclude the proof, since, if $f\in X$, then clearly $\min M(f, \Lambda)=\min L(f, \Lambda)$ is isolated in $M(f, \Lambda)$, and thus also in $L(f, \Lambda)$ (and $X$ is $C^0$-open, by Proposition 3). 

Since the restriction of $f$ to $\tilde\Lambda$ is injective, by Proposition 2 there is a unique $p\in \Lambda$ such that $f(p)=t_0$, and the restriction of $\varphi$ to the closure of the orbit of $p$ is minimal. Let $\theta=(\dots, a_{-2}, a_{-1}, a_0, a_1, a_2,\dots)$ be the kneading sequence of $p$. Assume by contradiction that $p$ is not a periodic point.

We will consider the following regular Cantor sets, which we may assume, using parametrizations, to be contained in the real line:
$K^s=W^s_{\text loc}(p)\cap \tilde\Lambda$, the set of points of $\tilde\Lambda$ whose kneading sequences are of the type $(\dots, b_{-2}, b_{-1}, a_0, a_1, a_2,\dots)$, for some $b_{-1}, b_{-2},\dots$  
(the point corresponding to this sequence will be denoted by $\pi^s(b_{-1},b_{-2},\dots)$)
 and $K^u=W^u_{\text loc}(p)\cap \tilde\Lambda$, the set of points of $\tilde\Lambda$ whose kneading sequences are of the type $(\dots, a_{-2}, a_{-1}, a_0, b_1, b_2,\dots)$, for some $b_1, b_2,\dots$ 
(the point corresponding to this sequence will be denoted by $\pi^u(b_1,b_2,\dots)$)
. Given a finite sequence $(c_{-1},c_{-2},\dots,c_{-r})$ such that $(c_{-r},\dots,c_{-2},c_{-1},a_0)$ is admissible, we define the interval $I^s(c_{-1},c_{-2},\dots,c_{-r})$ to be the convex hull of 
$$\{(\dots, b_{-2}, b_{-1}, a_0, a_1, a_2,\dots)\in K^s|b_{-j}=c_{-j},1\le j\le r\}.$$ 
Analogously, given a finite sequence $(d_1,d_2,\dots,d_s)$ such that $(a_0,d_1,d_2,\dots,d_s)$ is admissible, we define the interval $I^u(d_1,d_2,\dots,d_s)$ to be the convex hull of 
$$\{(\dots, a_{-2}, a_{-1}, a_0, b_1, b_2,\dots)\in K^u|b_j=d_j,1\le j\le s\}.$$ 

If $a$ and $b$ are the values of the derivative of $f$ at $p$ applied to the unit tangent vectors of $W^s_{\text loc}(p)$ and $W^u_{\text loc}(p)$, respectively, we have that $a$ and $b$ are non-zero and, considering local isometric parametrizations of $W^s_{\text loc}(p)$ and $W^u_{\text loc}(p)$ which send $p$ to $0$, we have that, locally, for $x\in W^s_{\text loc}(p)\supset K^s$ and $y\in W^u_{\text loc}(p)\supset K^u$, $f(x,y)=ax+by+O(x^2+y^2)$, in coordinates given by extended local stable and unstable foliations of the horseshoe $\tilde\Lambda$ (here, the point $p$ has coordinates $(x,y)=(0,0)$; we will use this local form in small neighbourhoods of $p$, i.e, for $|x|$ and $y$ small). 

Let $k\ne 0$ such that $a_k=a_0$. We define 
$$d_k=\min\{d(\pi^s(a_{k-1},a_{k-2},\dots),p), d(\pi^u(a_{k+1},a_{k+2},\dots),p)\}.$$ 
There are $0<\lambda_1<\lambda_2<1$ such that the norm of the derivative of $\varphi$ restricted to a stable direction and the inverse of the norm  of the derivative of $\varphi$ restricted to a unstable direction always belong to $(\lambda_1,\lambda_2)$.
We say that $k>0$ is a {\it weak record} if $d_k<d_j$ for all $j$ with $1\le j<k$ such that $a_j=a_0$. We will construct the sequence $0<k_1<k_2<\dots$ of the {\it records}. Let $k_1$ be the smallest $k>0$ with $a_k=a_0$ and, given a record $k_n$, $k_{n+1}$ will be the smallest weak record $k>k_n$ with $d_k<\min\{|a/b|,|b/a|\}\cdot \lambda_1^3 \cdot d_{k_n}$.
We say that $k>0$ such that $a_k=a_0$ is {\it left-good} if $f(\pi^s(a_{k-1},a_{k-2},\dots))<f(p)$ and that $k$ is {\it right-good} if $f(\pi^u(a_{k+1},a_{k+2},\dots))<f(p)$. We say that $k$ is {\it left-happy} if $k$ is left-good but not right-good or if $k$ is left-good and right-good and $d(\pi^s(a_{k-1},a_{k-2},\dots),p)\ge d(\pi^u(a_{k+1},a_{k+2},\dots),p)$. We say that $k$ is {\it right-happy} if $k$ is right-good but not left-good or if $k$ is left-good and right-good and $d(\pi^s(a_{k-1},a_{k-2},\dots),p)<d(\pi^u(a_{k+1},a_{k+2},\dots),p)$.  Notice that $k$ cannot be simultaneously left-happy and right-happy. We say that an index $k>0$ is {\it cool} if it is left-happy or right-happy and $a_{k+j}=a_j$ for every $j$ with $|j|\le K$. By the minimality, we have $\lim d_{k_n}=0$, and, since $f(\varphi^j(p))<f(p)$ for all $j\ne 0$, for all large values of $n$, $k_n$ is cool.

If $k$ is a cool index then we define its {\it basic cell} as follows: if $k$ is left-happy, we take $r_k$ to be the positive integer $r$ such that $a_{k-j}=a_{-j}$ for $0\le j<r$ and  $a_{k-r}\ne a_{-r}$, and $s_k$ to be the smallest positive integer $s$ such that $\min\{|a/b|,|b/a|\}\cdot \lambda_1^3|I^u(a_{k+1},a_{k+2},\dots,a_{k+s})|\le |I^s(a_{k-1},a_{k-2},\dots,a_{k-r_j})|$. Analogously, if $k$ is right-happy, we take $s_k$ to be the positive integer $s$ such that $a_{k+j}=a_{j}$ for $0\le j<s$ and   
$a_{k+s}\ne a_{s}$, and $r_k$ to be the smallest positive integer $r$ such that 
$\min\{|a/b|,|b/a|\}\cdot \lambda_1^3|I^s(a_{k-1},a_{k-2},\dots,a_{k-r})|\le |I^u(a_{k+1},a_{k+2},\dots,a_{k+s_k})|$. 
The basic cell of $k$ is the finite sequence \hfill\break $(a_{k-r_k},\dots,a_{k-1},a_k,a_{k+1},\dots,a_{k+s_k})$ indexed by the interval $[-r_k,s_k]$ of integers (so that the index $0$ in this interval corresponds to $a_k$). Notice that $r_{k_{n+1}}>r_{k_n}$ and $s_{k_{n+1}}>s_{k_n}$ for every $n$ large. We define the {\it extended cell} of $k$ as the finite sequence \hfill\break $(a_{k-\tilde r_k},\dots,a_{k-1},a_k,a_{k+1},\dots,a_{k+\tilde s_k})$ indexed by the interval $[-\tilde r_k,\tilde s_k]$ of integers, where $\tilde r_k:=\lfloor(1+2/K)r_k\rfloor$ and $\tilde s_k:=\lfloor(1+2/K)s_k\rfloor$. A crucial remark is that, since $f$ satisfies the conclusions of Proposition 1 and $f(\varphi^j(p))<f(p)$ for all $j\ne 0$, \hfill\break
$\bullet$ There is a positive integer $r_0$ such that for every $m>0$ which is not cool (in particular if $a_m\ne a_0$), and any point $q\in \tilde\Lambda$ whose kneading sequence $(\dots,b_{-1},b_0,b_1,\dots)$ satisfies $b_j=a_{m+j}$ for $-r_0\le j\le r_0$, we have $f(q)<f(p)$. Indeed, there is a constant $\tilde r$ such that, if $m>0$ is not cool, then $(a_{m-\tilde r},\dots,a_{m-1},a_m,a_{m+1},\dots,a_{m+\tilde r})\ne (a_{-\tilde r},\dots,a_{-1},a_0,a_{1},\dots,a_{\tilde r})$. We have $f(\varphi^m(p))<f(p)$, and, if $r_0$ is much larger than $\tilde r$, if $q$ is a point in $\tilde\Lambda$ whose kneading sequence $(\dots,b_{-1},b_0,b_1,\dots)$ satisfies $b_j=a_{m+j}$ for $-r_0\le j\le r_0$, we have $f(q)$ much closer to $f(\varphi^m(p))$ than to $f(p)$, and so $f(q)<f(p)$ (recall that $f$ is injective in $\tilde\Lambda$).  \hfill\break
$\bullet$ For any cool index $k>0$, if $(a_{k-\tilde r_k},\dots,a_{k-1},a_k,a_{k+1},\dots,a_{k+\tilde s_k})$ is the extended cell of $k$, and $q\in \tilde\Lambda$ is any point whose kneading sequence $(\dots,b_{-1},b_0,b_1,\dots)$ satisfies $b_j=a_{k+j}$ for $-\tilde r_k\le j\le \tilde s_k$, we have $f(q)<f(p)$. Indeed, since $|f(p)-f(q)|\ge c\cdot|p-q|^{k/(k-1)}, \forall p, q\in \tilde\Lambda$, by definition of the extended cells, $f(p)$ does not belong tho the convex hull of the image by $f$ of the image by $\Pi$ of the cylinder $\{(\dots,b_{-1},b_0,b_1,\dots)| b_j=a_{k+j}, -\tilde r_k\le j\le \tilde s_k\}$, which contains the point $f(\varphi^k(p))<f(p)$.

We now show the following
 
\noindent {\it{Claim:}} There is a positive integer $m$ such that we never have $(a_{-mt},\dots,a_{-1},a_0,a_1,\dots,a_{mt-1})=\gamma^{2m}=\gamma\gamma\dots\gamma$ ($2m$ times), for any finite sequence $\gamma$, where $t=|\gamma|$. 

Indeed, take a positive integer $k_0$ such that $\lambda_2^{k_0}<\lambda_1$, a large positive integer $m_0$ (with $m_0\ge\max\{r_0,K\}$) and $m=m_0 k_0$. Suppose by contradiction that \hfill\break $(a_{-mt},\dots,a_{-1},a_0,a_1,\dots,a_{mt-1})=\gamma^{2m}$ for some $\gamma=(c_1,c_2,\dots,c_t)$. We may assume that $\gamma$ is not of the form $\alpha^n$ for a smaller sequence $\alpha$ (otherwise we may replace $\gamma$ by $\alpha$). The Markov value $\tilde t_0$ of $\Pi(\overline\gamma)=\Pi(\dots\gamma\gamma\gamma\dots)$, which is larger than $t_0$, is attained at $c_1$. Indeed, if $2\le j\le t$, either $j-1$ is not cool or the extended cell of $j-1$ (centered in $c_j$) is contained in $\gamma^3$, so $f(\varphi^{j-1}(\Pi(\overline\gamma)))<f(p)=t_0$. Take the maximum values of $m_1, m_2\ge m$ for which $(a_{-m_1 t},\dots,a_{-1},a_0,a_1,\dots,a_{m_2 t-1})=\gamma^{m_1+m_2}$. Since $f(\varphi^{-(m_1-m_0)t}(p))$, $f(\varphi^{-(m_1-m_0-1)t}(p))$, $f(\varphi^{(m_2-m_0-1)t}(p))$ and $f(\varphi^{(m_2-m_0)t}(p))$ are smaller than $f(p)=t_0$, it follows that, for every $j\ge 1$, $f(\pi^u(\gamma^j a_{m_2t}, a_{m_2 t+1},a_{m_2 t+2},\dots))<f(\pi^u(\gamma^{j+1} a_{m_2t}, a_{m_2 t+1},a_{m_2 t+2},\dots))$ (the suffix $(a_{m_2t}, a_{m_2 t+1},a_{m_2 t+2},\dots)$ helps diminishing the value of $f$) and \hfill\break $f(\pi^s((\gamma^t)^j a_{-m_1 t-1},a_{-m_1 t-2},\dots))<f(\pi^s((\gamma^t)^{j+1} a_{-m_1 t-1},a_{-m_1 t-2},\dots))$, where \hfill\break $\gamma^t=(c_t,\dots,c_2,c_1)$ (the prefix $(a_{-m_1 t-1},a_{-m_1 t-2},\dots)$ also helps diminishing the value of $f$). Thus, by comparison, and using the previous remark on extended cells, if we delete from each factor of $(\dots,a_{-1},a_0,a_1,\dots)$ equal to $(a_{-(m_1+1) t},\dots,a_{-1},a_0,a_1,\dots,a_{(m_2+1) t-1})$ the factor $(a_0,a_1,\dots,a_{t-1})=\gamma$, we will reduce the Markov value of the sequence, a contradiction (notice that the number of consecutive copies of $\gamma$ in a factor of $(\dots,a_{-1},a_0,a_1,\dots)$ is bounded, since $\tilde t_0>t_0$). This concludes the proof of the Claim.\\

Notice that we can assume that $K$ is much larger than $m^2$ ($K>5m^2$ is enough for our purposes), by reducing $\tilde\Lambda$, if necessary.

In order to conclude the proof we will have two cases: 

(i) There are arbitrarily large values of $n$ for which $k_n$ is left-happy.

In this case we will show that, for such a large value of $n$, the periodic point $q$ of $\tilde\Lambda$ whose kneading sequence $(\dots, b_{-1},b_0,b_1,\dots)$ has period $(a_0,a_1,\dots,a_{k_n-1})$ (and so satisfies $b_m=a_{m\pmod{k_n}}$, for all $m$; notice that, since $\tilde\Lambda$ is locally maximal and $p\in\tilde\Lambda$, we have $q\in \tilde\Lambda$ for $n$ large) has Markov value smaller than $t_0$, a contradiction.  In order to do this, notice that if $r_{k_n}>2m^2 k_n$, then we get a contradiction by the previous Claim. If there is $0<k<k_n$ which is cool and satisfies $r_k>2m^2 k$ or $s_k>2m^2(k_n-k)$, we also get a contradiction by the previous Claim. Otherwise, except perhaps for the terms whose indices are multiple of $k_n$, any term equal
to $a_0$ in the periodic sequence with period $(a_0, a_1,\dots,a_{k_n-1})$ has a neighbourhood in this
periodic sequence which coincides with the extended cell of a corresponding element of the
original sequence $(\dots, a_{-1}, a_0, a_1,\dots)$, and so the Markov value of the point corresponding
to this periodic sequence is smaller than $t_0$. For the terms whose indices are multiple of $k_n$ (which correspond to the point $q$), if $\hat s$ is the positive integer such that $a_{k_n+j} = a_j$ for $0 \le j < \hat s$ and $a_{k_n+\hat s}\ne a_{\hat s}$, then $b_j=a_j$ for all $0\le j<k_n+\hat s$, and so $d(\pi^u(b_1, b_2,\dots), p)=o(d(\pi^u(a_{k_n+1}, a_{k_n+2},\dots), p))=o(d(\pi^s(a_{k_n-1}, a_{k_n-2},\dots), p))$, which implies $f(q)<f(p)=t_0$ (since $f(x,y) = ax + by + O(x^2 + y^2))$.

(ii) For all $n$ large, $k_n$ is right-happy. 

In this case we will show that, for $n$ large, the periodic point $q'$ of $\tilde\Lambda$ whose kneading sequence $(\dots, b'_{-1},b'_0,b'_1,\dots)$ has period $(a_{k_n},a_{k_n+1},\dots,a_{k_{n+1}-1})$ (and so satisfies $b_m=a_{m\pmod{k_{n+1}-k_n}+k_n}$, for all $m$; notice that, since $\tilde\Lambda$ is locally maximal and $p\in\tilde\Lambda$, we have $q'\in \tilde\Lambda$ for $n$ large) has Markov value smaller than $t_0$, a contradiction. In order to do this, notice that if $r_k>2m^2 (k-k_n)$ for some $k$ which is cool and satisfies $k_n<k\le k_{n+1}$ or $s_k>2m^2(k_{n+1}-k)$ for some $k$ which is cool and satisfies and $k_n\le k<k_{n+1}$, then we get a contradiction by the previous Claim. Otherwise, except perhaps for the terms whose indices are multiple of $k_{n+1}-k_n$, any term equal
to $a_0$ in the periodic sequence with period $(a_{k_n}, a_{k_n+1},\dots,a_{k_{n+1}-1})$ has a neighbourhood in this
periodic sequence which coincides with the extended cell of a corresponding element of the
original sequence $(\dots, a_{-1}, a_0, a_1,\dots)$, and so the Markov value of the point corresponding
to this periodic sequence is smaller than $t_0$. For the terms whose indices are multiple of $k_{n+1}-k_n$ (which correspond to the point $q'$), if $\hat r$ is the positive integer such that $a_{k_{n+1}-j} = a_{-j}$ for $0 \le j < \hat r$ and $a_{k_{n+1}-\hat r}\ne a_{-\hat r}$, then $b_j=a_{k_{n+1}+j}$ for all $-\min\{\hat r, k_{n+1}-k_n+r_{k_n}\}<j\le 0$ and $b_j=a_{k_n+j}$ for $0\le j\le \tilde s_{k_n}$ (notice that $b_j=a_j$ for $0\le j<s_{k_n}$, but $b_{s_{k_n}}\ne a_{s_{k_n}}$), and so, by the definition of record, $d(\pi^s(a_{k_{n+1}-1}, a_{k_{n+1}-2},\dots), p)\le (1+o(1))|b/a|\lambda_1^3 d(\pi^u(a_{k_n+1}, a_{k_n+2},\dots), p)$ and thus $d(\pi^s(b_{-1}, b_{-2},\dots), p)\le (1+o(1))|b/a|\lambda_1 d(\pi^u(b_1, b_2,\dots), p)$, which implies $f(q)<f(p)=t_0$ (since $f(x,y) = ax + by + O(x^2 + y^2))$.

Now we concluded that $p$ and $\theta=(\dots, a_{-2}, a_{-1}, a_0, a_1, a_2,\dots)$ are periodic. Let $\alpha=(a_0, a_1,\dots, a_{s-1})$ be a minimal period of $\theta$. If $f\notin X$ then, for each positive integer $n$, there is a point $q_n\in \Lambda$ which does not belong to the orbit of $p$ such that $sup_{r\in \mathbb{Z}}f(\varphi^{r}(q_n))\le f(p)+1/n$. Given a kneading sequence $(\dots, c_{-2}, c_{-1}, c_0, c_1, c_2,\dots)$ of some point of $\Lambda$, we say that $k\in \mathbb{Z}$ is a {\it regular} position of it if there is an integer $j$ such that $c_{k+i}=a_{j+i\pmod s}$ for $1-s\le i\le 1$, and that $k$ is a {\it strange} position otherwise. Since $q_n$ does not belong to the orbit of $p$, there is $k_n\in \mathbb{Z}$ which is a strange position of the kneading sequence of $q_n$. Let $\tilde q_n:=\varphi^{k_n}(q_n)$. Then $0$ is a strange position of its kneading sequence. Take a subsequence of $(\tilde q_n)$ converging to a point $\tilde q\in \Lambda$. Then $0$ is a strange position of the kneading sequence of $\tilde q$ (and thus $\tilde q$ does belong to the orbit of $p$) and $sup_{r\in \mathbb{Z}}f(\varphi^{r}(\tilde q))\le f(p)$. This implies (since $f$ is injective in $\tilde\Lambda$ and $f(p)$ is the smallest element of $L(f,\Lambda)$) that $f(\varphi^{r}(\tilde q))<f(p), \forall r\in\mathbb Z$ and $\limsup_{r\to+\infty}f(\varphi^{r}(\tilde q))=\limsup_{r\to-\infty}f(\varphi^{r}(\tilde q))=f(p)$. 

Let $\tilde\theta=(\dots, b_{-2}, b_{-1}, b_0, b_1, b_2,\dots)$ be the kneading sequence of $\tilde q$, and let $m$ and $m_0$ be as in the Claim. We should have factors of $\tilde\theta$ with (arbitrarily large) positive indices equal to $\alpha^{2m}$, and, analogously, we should have factors of $\tilde\theta$ with negative indices equal to $\alpha^{2m}$. This implies that there is a factor of $\tilde\theta$ of the form $\alpha^{2m}\beta\alpha^{2m}$, where $\beta$ is a finite sequence which is not of the form $\alpha^r$ for any positive integer $r$, and which me may assume not to contain any factor of the form $\alpha^{2m}$ (otherwise we find a smaller factor with these properties). We claim that if $z\in\Lambda$ is a point whose kneading sequence is periodic with period $\alpha^{2m}\beta$ then $f(\varphi^r(z))<f(p), \forall r\in\mathbb Z$, and so, since $z$ is periodic, $sup_{r\in \mathbb{Z}}f(\varphi^{r}(z))<f(p)$, a contradiction. In order to do this, we use an argument somewhat analogous to the proof of the Claim: let $j$ such that $(b_j,b_{j+1},\dots,b_{j+M})=\alpha^{2m}\beta\alpha^{2m}$, where $M=4ms+b-1$, with $b=|\beta|$. Since $f(\varphi^{j+(2m-m_0)s-i}(\tilde q))<f(p)$ for $1\le i\le 2s$ and $f(\varphi^{j+(2m+m_0)s+b+i}(\tilde q))<f(p)$ for $1\le i\le 2s$, we have that $f(\pi^u(\alpha^j\beta\overline\alpha))<f(p)$ (so the suffix $\beta$ helps diminishing the value of $f$) and $f(\pi^s((\alpha^t)^j\beta^t\overline{\alpha^t}))<f(p)$ for every $j\ge 0$ (so the prefix $\beta$ helps diminishing the value of $f$). So, for the positions corresponding to $a_0$ in $\alpha^{2m}$, we use the fact that $\beta$ helps diminishing the value of $f$ both as a suffix and as a prefix in order to show that the corresponding values of $f$ are smaller than $f(p)$. For the other positions, including the positions inside $\beta$, we use the extended cell argument in order to show that the corresponding values of $f$ are also smaller than $f(p)$. This concludes the proof.
\qed
\bibliographystyle{alpha}	
\bibliography{Minima}

\end{document}